\documentclass{amsart}
\usepackage{graphicx} % Required for inserting images
\usepackage{tikz}
\usepackage[T1]{fontenc}     % <<--- important: enables ogonek glyphs
\usepackage[utf8]{inputenc} 
\newtheorem{theorem}{Theorem}

\newtheorem{lemma}[theorem]{Lemma}
\newtheorem{definition}[theorem]{Definition}
\newtheoremstyle{myremark} % name
  {}                        % space above
  {}                        % space below
  {\normalfont}             % body font
  {}                        % indent (empty = no indent)
  {\bfseries}               % theorem head font
  {.}                       % punctuation after theorem head
  { }                       % space after theorem head
  {}                        % theorem head spec

\theoremstyle{myremark}

\title{Finite stick-breaking sequences}
\author{Jan-Tino Brethouwer \and Anne Buijsrogge \and Robbert Fokkink }
\address{Department of Applied Mathematics, Technical University Delft, Netherlands.}
\email{r.j.fokkink@tudelft.nl}
\date{\today}
\keywords{fair distribution, stick-breaking.}
\subjclass[2020]{11K38}
\begin{document}

\begin{abstract}
    In 1949, De Bruijn and Erd\H{o}s considered infinite stick-breaking
    sequences. 
    In this work, we consider finite sequences and prove that an infinite
    sequence that is due to De Bruijn, Erd\H{o}s, and Toulmin is in some
    sense unique.
\end{abstract}

\maketitle

\section{Introduction}

Take a stick of length one. Break it into two pieces. Repeat. At each step, take one of the pieces and break it. At step \(n\) there are \(n\) pieces, and your job is to keep the ratio between the shortest and the longest piece as close to \(1\) as possible. What is the best ratio between the shortest and longest pieces that you can maintain throughout this process?

This problem was introduced and solved by De Bruijn and
Erd\H{o}s~\cite{deBE}. It was revisited by Ramshaw~\cite{ramshaw}
and, more recently, by Anholcer et al.~\cite{anholcer}, who connected
it to a problem of Steinhaus on fair division~\cite{steinhaus1949}. De Bruijn and Erd\H{o}s also considered
a more general problem, taken up recently by Clément and Steinerberger
and by Korsky~\cite{clement2025balanced,korsky2026resolution}.
In a different direction, Kakutani proposed a random stick-breaking
process. His conjecture about the distribution of the break points was
proved by van Zwet; Pyke and van Zwet later obtained further convergence
results~\cite{pykevanzwet}. 
Carbone, Iaco, and Vol{\v{c}}i{\v{c}} considered the ergodic properties
of Kakutani's 
stick-breaking process~\cite{carbone2014}.

We consider a finite version of the stick-breaking problem.
Suppose the process ends after $N$ breaks. What is the best ratio
we can maintain at every step? 
That is what led us to consider finite stick-breaking sequences.
This question was inspired by a
social-distancing experiment. People arrive at a
round table one by one and choose where to sit without talking to each other.
How far apart can they manage to sit? This mimics a practical question that arose
during the COVID-19 pandemic: were assigned seats in cinemas and concert halls
really necessary~\cite{buijsrogge2026self}?

\section{Definitions and notation}

Formally, we consider a sequence $(a_n)$ of points on the unit circle, which we identify with the interval $[0,1)$ modulo 1, with the standard metric.
To simplify notation, we often suppress the index and write a sequence simply as $(a)$, reserving $n$ for the number of breaks.
Placing the first point $a_1$ makes a cut that opens the circle and turns it into an interval.
After the first $n$ points $a_1, a_2, \ldots, a_n$ are chosen, the circle is partitioned into $n$ half-open
intervals, or \emph{sticks}. We denote by $M_n(a)$ and $m_n(a)$ the lengths of the longest and shortest sticks, respectively. 
Our goal is to keep the ratio 
\[\rho_n(a)=\frac{M_n(a)}{m_n(a)}\] 
as close to $1$ as possible. 

The average stick length after $n$ breaks is $\frac 1n$, which we rescale to unit length.
For the rescaled maximum and minimum lengths
$n\cdot M_n(a)$ and $n\cdot m_n(a)$,
De Bruijn and Erd\H{o}s~\cite{deBE} proved that  
\begin{equation}\label{eq:1}
\limsup_{n\to\infty} {n}{\cdot} {M_n(a)}\geq \frac 1{\log 2}
\end{equation}
and
\begin{equation}\label{eq:2}
\liminf_{n\to\infty} {n}{\cdot}{m_n(a)}\leq \frac 1{2\log 2}.
\end{equation}
Therefore, it is impossible to break the sticks in such a way that the ratio
$
\rho_n(a)
$
converges to 1. De Bruijn and Erd\H{o}s proved that,
for any sequence $(a)$, the best possible asymptotic ratio is $2$
\begin{equation}\label{eq:3}
\limsup_{n\to\infty} \rho_n(a) \geq 2.
\end{equation}
There are many sequences $(a_n)$ such that $\rho_n(a)\leq 2$ for all $n$.
Indeed, any finite sequence $a_1,\ldots,a_n$ that satisfies this bound
can be extended indefinitely by always breaking a longest stick into two
equal halves. Maintaining the strict inequality $\rho_n(a)<2$ for all $n$
is harder. Harder still is to satisfy the three bounds all at once:
\begin{equation}\label{ineq:all3}
\boxed{
\rho_n(a)<2,\qquad
nM_n(a)<\frac1{\log 2},\qquad
nm_n(a)>\frac1{2\log 2}.}
\end{equation}
De Bruijn and Erd\H{o}s proved that the sequence
\begin{equation}\label{eq:BERT}
b_n=\log_2(2n\!-\!1)\pmod 1
\end{equation}
has this property. Toulmin rediscovered the result some years
later~\cite{toulmin}, so we call this the De Bruijn--Erd\H{o}s--Toulmin
(BET) sequence.

A few decades later, Niederreiter~\cite{niederreiter1984} rediscovered the inequalities of De Bruijn and Erd\H{o}s and claimed them as his own~\cite[p.~153]{niederreiter1992}. 
He attributed the BET sequence to Ruzsa. We could have called it
the BERT sequence. 
We will prove that BET is the only greedy stick-breaking sequence
satisfying all three strict inequalities. We do not know whether the
greedy assumption is needed for uniqueness. Ramshaw showed that if
the inequalities are made non-strict, infinitely many sequences
satisfy them.

Steinhaus~\cite[Problem 7]{steinhaus} asked if it is possible to find $N$ numbers $x_1,x_2,\ldots,x_N$ 
in the unit interval $[0,1]$ so
that the first two lie in different halves, the first three in
different thirds, etc.
Warmus~\cite{warmus} proved that this is only possible up to $N=17$.
Anholcer et al.~\cite{anholcer} modified the problem and 
considered the minimum number $f(n)$ of breaks such that
all sticks have length $\leq \tfrac 1n$. They proved that
the BET sequence is optimal.

\section{A review of the results of de Bruijn and Erd\H{o}s}

The following lemmas, due to de Bruijn and Erd\H{o}s~\cite{deBE},
give elegant proofs of inequalities~(\ref{eq:1}), (\ref{eq:2}) and~(\ref{eq:3}).

\begin{lemma}[De Bruijn and Erd\H{o}s]\label{lem:1}
    Let $H_n=\frac 11 +\frac 12+\cdots +\frac 1n$ be the $n$-th harmonic
    number and let $H^m_n=H_m-H_n$.
    Let \[\Lambda_n(a)=\sup\{i\cdot  M_i(a), n/2< i\leq n\}\] and
    \[\lambda_n(a)=\inf\{i\cdot m_i(a), n/2< i\leq n\}.\]
    Then $\Lambda_{2n\!-\!1}(a)\geq 1/H^{2n\!-\!1}_{n-1}$ and $\lambda_{2n}(a)\leq 1/2H^{2n}_{n}$. 
    The inequalities~(\ref{eq:1}) and~(\ref{eq:2}) follow from standard properties of
    the harmonic numbers. The sequence $H^{2n\!-\!1}_{n-1}$ is descending and $H^{2n}_{n}$ is ascending, with limit ${\log 2}$.
\end{lemma}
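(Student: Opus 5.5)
The plan is to use the classical pigeonhole argument of De Bruijn and Erd\H{o}s. The key observation is that the sticks present at a late time $n$ must ``remember'' when they were created. Fix $n$. At time $2n-1$ there are $2n-1$ sticks, and at time $n-1$ there are $n-1$ sticks. Each break removes one stick and creates two, so the number of sticks rises by one per step. Consider the sticks present at time $2n-1$, together with the times at which they were created. We want to locate $n$ sticks $S_n,S_{n+1},\dots,S_{2n-1}$, nested so that $S_i$ is a stick existing at time $i$, and pairwise disjoint.

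\textbf{Lower bound on $\Lambda$.} I would argue as follows. At time $2n-1$ there are $2n-1$ sticks. At most $n-1$ of the breaks from time $n$ to $2n-1$ destroy sticks that existed at time $n$; more precisely, at least $n$ of the $2n-1$ current sticks descend from sticks present at time $n$ whose lifetimes can be traced back. The cleaner version is a backwards selection. Starting from time $2n-1$, we choose at each time $i$ (going down from $2n-1$ to $n$) a stick present at time $i$ that is also a stick (not a subdivided piece) at all later times up to $2n-1$, and that is distinct from those already chosen. Such a stick exists because at time $i$ there are $i$ sticks, and only $2n-1-i$ of them are broken afterwards. This leaves $i-(2n-1-i)=2i-2n+1\ge 1$ unbroken sticks, of which only $2n-1-i$ were previously chosen; so we need $2i-2n+1>2n-1-i$. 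Since this count fails, I would instead pick the sticks at time $i$ as the surviving ones at time $2n-1$, giving $n$ disjoint sticks of lengths $\ell_n,\dots,\ell_{2n-1}$ with $\ell_i\le M_i(a)\le \Lambda/i$ for $i$ the time they are ``assigned''. I expect this assignment to be the main obstacle: finding a matching between times $i\in\{n,\dots,2n-1\}$ and disjoint sticks existing at time $i$. I would handle it by Hall-type counting. The sticks at time $2n-1$ include at least $i-(2n-1-i)$ that already exist at time $i$, and assigning greedily from $i=n$ upward uses the unbroken sticks at each step. Then summing gives
\[1\ \ge\ \sum_{i=n}^{2n-1}\ell_i\ \ \text{and}\ \ \ell_i\le \Lambda_{2n-1}/i,\]
which needs the reverse inequality. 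So for the $\Lambda$ bound I would instead cover: choose sticks $T_i$ at time $i$ whose union over $i$ is the whole circle (for example, every stick at time $n$ is broken at most once by time $2n-1$ in the worst case, and otherwise contains a descendant). This gives $1\le\sum_{i=n}^{2n-1}\Lambda/i=\Lambda H^{2n-1}_{n-1}$.

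\textbf{Upper bound on $\lambda$.} For the minimum, the disjoint-packing version applies. There are $n+1$ times $i=n,\dots,2n$, and we would get $\sum_{i=n}^{2n} \ell_i$ only with a factor $2$. I would take at time $2n$ the $2n$ sticks and pair each time $i\in\{n+1,\dots,2n\}$ with two disjoint sticks, each of length $\ge m_i\ge\lambda/i$ (namely the two pieces created at the break at step $i$, or surviving originals). Summing gives $1\ge 2\lambda H^{2n}_n$.

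\textbf{Limits.} Finally, the monotonicity claims follow from $H^{2n+1}_{n}-H^{2n-1}_{n-1}=\frac1{2n}+\frac1{2n+1}-\frac1n<0$ and $H^{2n+2}_{n+1}-H^{2n}_n=\frac1{2n+1}+\frac1{2n+2}-\frac1{n+1}>0$. Both sequences converge to $\log 2$ by the Riemann-sum comparison with $\int_1^2 dx/x$. Letting $n\to\infty$ then yields (\ref{eq:1}) and (\ref{eq:2}).
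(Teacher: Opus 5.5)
\textbf{Gap in the $\Lambda$ bound.} Your packing attempts at time $2n-1$ are dead ends, as you notice: a family of disjoint sticks only gives $\sum\ell\le 1$, which is the wrong direction. The covering idea you fall back on is the right one. But the existence of sticks $T_n,\dots,T_{2n-1}$, with $T_i$ present at time $i$ and $\bigcup_i T_i$ the whole circle, \emph{is} the entire content of the bound, and your parenthetical does not establish it. It is unclear what assignment of times it describes, and nothing in it ensures the times are distinct.

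The missing observation, which is what the paper uses, is this. Between time $n$ and time $n+k$ only $k$ breaks occur, so at most $k$ of the $n$ sticks present at time $n$ have been destroyed. In the paper's form this reads $M_{n+k}(a)\ge \ell_{n-k}$, for the sorted lengths $\ell_1\le\cdots\le\ell_n$ at time $n$. To turn it into your cover, order the time-$n$ sticks $S_1,\dots,S_n$ by the step at which they are broken, with unbroken ones last. These break steps are distinct and at least $n+1$, so $S_r$ is still intact at time $n+r-1$. Taking $T_{n+r-1}=S_r$ gives
$1=\sum_r \ell(S_r)\le\sum_{i=n}^{2n-1}M_i(a)\le \Lambda_{2n-1}(a)\,H^{2n-1}_{n-1}$.

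\textbf{The $\lambda$ bound.} This part is correct and takes a different route from the paper, provided you make the assignment precise. Take the $2n$ sticks present at time $2n$. Assign each one created at step $i\in\{n+1,\dots,2n\}$ to time $i$; there are at most two per step. Fill the remaining slots with the sticks that survive from time $n$; there are exactly enough of them, since both totals are $2n$. Each stick is then present at its assigned time $i$, so its length is at least $m_i(a)\ge \lambda_{2n}(a)/i$. Summing gives $1\ge 2\lambda_{2n}(a)H^{2n}_n$. Literally using ``the two pieces created at step $i$'' would fail, because later pieces can lie inside earlier ones; you must restrict to sticks surviving at time $2n$.

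The paper instead bounds $m_{2n-k}(a)\le \ell_{2k+1}$ and uses $\ell_1+\ell_3+\cdots+\ell_{2n-1}\le\tfrac12$. Your charging argument avoids sorting and is exactly dual to the matching needed for $\Lambda$.

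Your monotonicity computations, the limit $\log 2$, and the passage to the two limiting inequalities are fine.
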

\begin{proof}
    Let $\ell_1\leq \ell_2\leq \cdots\leq\ell_n$ be the lengths of the sticks after
    breaking $n$ times, sorted in ascending order. They add up to one, and $M_n(a)$ is equal
    to $\ell_n$. 
    The maximal stick length only decreases if we break the longest stick.
    It follows that 
    \begin{equation}\label{ineq:M_n}
        M_{n+k}(a)\geq \ell_{n-k},
    \end{equation} 
    for $k=0,\ldots,n-1$.
    Since the $\ell_i$ add up
    to one, we have 
    \begin{equation}\label{cut}
    M_n(a)+M_{n+1}(a)+\cdots+M_{2n\!-\!1}(a)\geq 1.
    \end{equation}
    By definition 
    \begin{equation}\label{ineq:Lambda}\Lambda_{2n\!-\!1}(a)\geq (n+k)\cdot M_{n+k}(a)\end{equation}
    for all $k=0,\ldots,n-1$ which implies
    that
    \begin{equation}\label{Lambdabound}
    \Lambda_{2n\!-\!1}(a)\cdot H^{2n\!-\!1}_{n-1}=\sum_{k={0}}^{n-1}\frac {\Lambda_{2n\!-\!1}(a)}{n+k}\geq 1.
    \end{equation}
    This gives the required lower bound on $\Lambda_{2n\!-\!1}(a)$.
    
    To bound $\lambda_{2n}(a)$ we work in the opposite direction, gluing broken sticks
    back together again.
    Let $\ell_1\leq \ell_2\leq \cdots\leq\ell_{2n}$ be the lengths of the sticks after
    breaking $2n$ times, sorted in ascending order. In particular, $m_{2n}(a)$
    is equal to $\ell_1$. We now work our way back, each step gluing two
    sticks. We have that 
    \begin{equation}\label{ineq:m_n}
    m_{2n-k}\leq \ell_{2k+1}        
    \end{equation}
    for $k=0,\ldots,n\!-\!1$
    because at least one of the $2k+1$
    shortest sticks remains unglued. We have
    \begin{equation}\label{glue}
    m_{2n}(a)+m_{2n\!-\!1}(a)+\cdots+m_{n+1}(a)\leq \ell_1+\ell_3+\cdots+\ell_{2n\!-\!1}\leq \frac 12.      
    \end{equation}
    By definition $\lambda_{2n}(a)/k\leq m_k(a)$ for all $n<k\leq 2n$ which implies
    that
    \begin{equation}\label{lambdabound2}
    \lambda_{2n}(a)\cdot H^{2n}_{n}\leq \frac 12.
    \end{equation}
        This gives the required upper bound on $\lambda_{2n}(a)$.
\end{proof}

Inequality~(\ref{eq:3}) follows from the following lemma.

\begin{lemma}[De Bruijn and Erd\H{o}s]\label{lem:2}
    For all $c<2$ and all $N$ there exists an $n\geq N$ such that $\rho_n(a)>c$.
\end{lemma}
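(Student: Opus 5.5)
The plan is to argue by contradiction and combine the two summation inequalities (\ref{cut}) and (\ref{glue}) from the proof of Lemma~\ref{lem:1}. Fix $c<2$ and $N$, and suppose that $\rho_i(a)\le c$ for every $i\ge N$. That is, $M_i(a)\le c\, m_i(a)$ for all $i\ge N$. Choose an integer $n\ge N$ large enough that
\[
\frac{2}{1+2/n}>c;
\]
this is possible because the left-hand side tends to $2>c$.

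First I would invoke (\ref{cut}) on the window $n,\dots,2n-1$ and then apply the assumed bound $M_i\le c\,m_i$ termwise, since all these indices are at least $N$:
\[
1\le \sum_{k=0}^{n-1} M_{n+k}(a)\le c\sum_{i=n}^{2n-1} m_i(a).
\]

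Second, I would compare this sum of minima with the one bounded in (\ref{glue}), which runs over $i=n+1,\dots,2n$. The two sums differ only in their end terms:
\[
\sum_{i=n}^{2n-1} m_i(a)=\sum_{i=n+1}^{2n} m_i(a)+m_n(a)-m_{2n}(a)\le \frac12+m_n(a).
\]
Here I use (\ref{glue}) for the first sum on the right, and drop $-m_{2n}(a)\le 0$. Moreover $m_n(a)\le 1/n$, because the shortest of $n$ sticks of total length one has length at most the average.

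Combining the two steps gives
\[
1\le c\left(\frac12+\frac1n\right),
\]
that is, $c\ge 2/(1+2/n)$. This contradicts the choice of $n$. Hence some $i$ in the range $n\le i\le 2n$, and in particular some $i\ge N$, satisfies $\rho_i(a)>c$, which proves the lemma.

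The only delicate point is the index shift between the window $[n,2n-1]$ in (\ref{cut}) and the window $[n+1,2n]$ in (\ref{glue}). I expect this boundary term to be the main obstacle. It is handled by the crude estimate $m_n(a)\le 1/n$, which costs only an $O(1/n)$ error and therefore vanishes in the limit.
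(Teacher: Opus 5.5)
Your proof is correct, but it takes a different route from the paper. The paper argues locally and multiplicatively. The shorter piece of a broken stick is at most half of it, so $2m_{n+1}(a)\le M_n(a)$. Chaining this from $N$ to $n$ under the assumption $\rho_i\le c$ gives $2^{n-N}/n\le c^{n+1-N}$, which fails for large $n$ when $c<2$.

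You argue globally and additively instead. You pair the two averaging bounds (\ref{cut}) and (\ref{glue}) from Lemma~\ref{lem:1} and absorb the one-step index shift between their windows with $m_n(a)\le 1/n$. The halving fact then enters only implicitly, through $\ell_1+\ell_3+\cdots\le\frac12$ in (\ref{glue}).

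Your version makes Lemma~\ref{lem:2} a corollary of the machinery already built for Lemma~\ref{lem:1}, and it localizes the violation explicitly. Running your computation with $C=\max_{n\le i\le 2n-1}\rho_i(a)$ in place of $c$ shows $C\ge 2n/(n+2)$ for every $n$. So the ratio exceeds $c$ in every window $[n,2n-1]$ once $n>2c/(2-c)$.

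The paper's product argument is self-contained. It also bounds the product of the ratios over any stretch $[N,n]$, not just their maximum; over $[n,2n-1]$ it gives $\prod\rho_i(a)>2^{n-2}$.

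Both arguments extend to $k$-shattering. The paper's extends as in Lemma~\ref{lem:ratio-generalized}. Yours extends by combining the two sums in Lemma~\ref{lem:generalized}, whose index windows are again offset by a single step.
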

\begin{proof}
    When a stick is broken, the shorter piece has length at most half of the original. It follows that $2m_{n+1}(a)\leq M_{n}(a)$. If we assume that 
    $\frac {M_n(a)}{m_n(a)}\leq c$ for all $n\geq N$, then using $M_n\geq 1/n$ we get 
    \begin{equation}\label{telescopic}        
    \frac{2^{n-N}}{n}\leq \frac {M_{n}2^{n-N}}{m_{N}}\leq \frac {M_N}{m_N}\cdot \frac {M_{N+1}}{m_{N+1}}\cdots\frac {M_{n}}{m_{n}}\leq c^{n+1-N}.
    \end{equation}
    Clearly, this is only possible if $c\geq 2$.
\end{proof}

Recall that the BET sequence is given by
$b_j=\log_2(2j-1)\pmod{1}$.
Multiplying $2j-1$ by a power of $2$ adds an integer to its
logarithm, so $b_j$ remains unchanged modulo~$1$.
For each odd number $1,3,\ldots,2n\!-\!1$, there is a unique
nonnegative integer $k$ such that
\[
n\leq 2^k(2j-1)<2n.
\]
These representatives are distinct, since every positive integer
has a unique odd part. Thus they are precisely the integers
$n,n+1,\ldots,2n\!-\!1$, and the breakpoints $b_1,\ldots,b_n$ are
\[
\log_2(n),\; \log_2(n+1),\; \ldots,\;
\log_2(2n\!-\!1)\pmod{1},
\]
possibly in a different order.

The map $\log_2(x)$ sends $[n,2n)$ bijectively onto
the unit circle $\pmod{1}$, preserving cyclic order. Thus these points are
the endpoints of the first $n$ sticks. Starting at $\log_2(n)$,
the first stick, $[\log_2(n),\log_2(n+1))$, is the longest,
and the last, $[\log_2(2n\!-\!1),\log_2(2n))$, is the shortest.

The next breakpoint is
\[
b_{n+1}=\log_2(2n+1)
       =\log_2\!\left(n+\tfrac12\right)\pmod{1}.
\]
Since $n<n+\tfrac12<n+1$, it splits the longest stick.
The two resulting sticks are shorter than all the remaining sticks.
We say that such a procedure is greedy.

\begin{definition}
A stick-breaking procedure is \emph{greedy} if, at every step,
it breaks a longest stick into two pieces. Each new piece must 
be at most as long as every
remaining stick, but at least as long as half the length of the
longest remaining stick.
\end{definition}

Greedy stick-breaking maintains the bound $\rho_n\leq2$.
If the first break divides the stick into two equal pieces,
then all subsequent stick lengths are determined:
\[
\left\{\frac12,\frac12\right\},\quad
\left\{\frac12,\frac14,\frac14\right\},\quad
\left\{\frac14,\frac14,\frac14,\frac14\right\},\quad\ldots.
\]
The ratio first reaches $2$ when there are three sticks, so
$\rho_n=2$ for all $n\geq3$.
These lengths are the same as those obtained from the
classical
van der Corput sequence~\cite{vanderCorput1935},
which remains a subject of study today~\cite{spiegelhofer}.

For the BET sequence we get that
\[
M_n(b)= \log_{2}\!\Big(1+\tfrac{1}{n}\Big),
\qquad
m_n(b)= -\log_{2}\!\Big( 1-\tfrac{1}{2n}\Big).
\]
For the BET sequence $n\cdot M_n(b)$ is 
increasing and $n\cdot m_n(b)$ is decreasing.
It immediately follows that the three 
inequalities in (\ref{ineq:all3}) are all satisfied
during the entire procedure.

\section{Finite BET sequences}

Since we are interested in stick lengths rather than break points,
we consider sequences of breakpoints to be the same if they produce the 
same stick lengths. 
\medbreak

We write $\Lambda_n(a)$ and $\lambda_n(a)$ for the scaled maximum and minimum
lengths up to $n$ breaks, which is slightly different from our notation
in Lemma~\ref{lem:1}. In particular,
\begin{align*}
\Lambda_n(a) &= \max\{\, i \cdot M_i(a) : i \leq n \,\}, \\
\lambda_n(a) &= \min\{\, i \cdot m_i(a) : i \leq n \,\}, \\
\rho_n(a)    &= \max\{\, M_i(a)/m_i(a) : i \leq n \,\}.
\end{align*}
According to Lemma~\ref{lem:1} we have
\[
\Lambda_{2n\!-\!1}(a)\geq \frac 1{H_{n-1}^{2n\!-\!1}},\;
\lambda_{2n}(a)\leq \frac 1{2H_{n}^{2n}}.
\]
How sharp are these inequalities?

Let us try to find a sequence $a_1,a_2,a_3$ such that
$\Lambda_3(a)=1/H^3_1=6/5$. There are infinitely many such
sequences. For example, we may take
\[
a_1=0,\qquad a_2=\frac35,\qquad
a_3=x\in\left[\frac15,\frac25\right].
\]
Then $2M_2(a)=3M_3(a)=6/5$, as required.
\newline
What about a sequence $a_1,a_2,a_3,a_4$ such that $\lambda_4(a)=1/2H^4_2=\frac {6}7$? 
It turns out that
there is only one way to do this
{\small
\[
0,\; \frac 47,\; \frac 27,\; \frac {11}{14}.
\]}
\vspace{-0.2cm}
\newline
For this sequence $a_1,a_2,a_3,a_4$ we have a suboptimal $\Lambda_3(a)=\frac 97 > 1/H^3_1$ and $\lambda_2(a)=\frac 67 < 1/(2H^2_1)$. 
It immediately follows at step four that there is no infinite sequence $(a)$ such that, for all $n$,  
\[
\lambda_{2n}(a) = \frac 1{2H_{n}^{\,2n}}.
\]  
The best one can hope for is the optimality at a fixed number of breakpoints.

\begin{definition}
A sequence \( a_1, \ldots, a_{2n\!-\!1} \) of odd length is called a \emph{\( (2n\!-\!1) \)-BET sequence} if it satisfies
\[
\Lambda_{2n\!-\!1}(a) = \frac{1}{H_{n-1}^{2n\!-\!1}}\;\text{ and }\;\rho_{k}(a)<2\text{ for all }k\leq 2n\!-\!1.
\]
Similarly, a sequence \( a_1, \ldots, a_{2n} \) of even length is called a \emph{\( 2n \)-BET sequence} if it satisfies
\[
\lambda_{2n}(a) = \frac{1}{2 H_{n}^{2n}}\;\text{ and }\;\rho_{k}(a)<2\text{ for all }k\leq 2n.
\]
\end{definition}
To abbreviate the notation, we write
\begin{equation}\label{eq:Lambdalambda}
\Lambda_{2n\!-\!1}=\frac 1{H_{n-1}^{2n\!-\!1}},\ \lambda_{2n}=\frac{1}{2H_n^{2n}}.
\end{equation}
Suppose that we want to construct a $(2n\!-\!1)$-BET sequence.
We need to control the maximum stick length, and keep the
ratio between longest and shortest stick below 2.
Assume that after $n$ breaks the stick lengths are
\begin{equation}\label{nsticks}
\frac{\Lambda_{2n\!-\!1}}{n},\;
\frac{\Lambda_{2n\!-\!1}}{n+1},\;
\ldots,\;
\frac{\Lambda_{2n\!-\!1}}{2n\!-\!1}.
\end{equation}
We still need $n-1$ breaks to complete the construction.
At each step, greedily break the longest remaining stick into two parts
that are smaller than the smallest stick.
After $n-1$ further breaks, we obtain a $(2n\!-\!1)$-BET sequence.
These further breaks can be chosen in many ways. However, the
stick lengths at the intermediate stage are uniquely determined.
Here, uniqueness is understood up to the equivalence introduced at the beginning of this section: sequences are identified if they produce the same stick lengths at every step.

\begin{theorem}\label{thm:fin1}
Every $(2n\!-\!1)$-BET sequence has the same stick lengths
after $n$ breaks, namely the lengths in~\eqref{nsticks}.
\end{theorem}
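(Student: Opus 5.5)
The plan is to rerun the proof of Lemma~\ref{lem:1} for a $(2n\!-\!1)$-BET sequence and show that equality in~\eqref{Lambdabound} forces every intermediate inequality in that proof to be an equality. Let $\ell_1\leq\cdots\leq\ell_n$ be the sorted stick lengths after $n$ breaks. By~\eqref{ineq:M_n}, $M_{n+k}(a)\geq \ell_{n-k}$ for $k=0,\ldots,n-1$. In the new notation, $\Lambda_{2n\!-\!1}(a)\geq (n+k)M_{n+k}(a)$. Writing $\Lambda_{2n\!-\!1}$ as in~\eqref{eq:Lambdalambda}, we get
\[
1=\sum_{k=0}^{n-1}\ell_{n-k}\leq\sum_{k=0}^{n-1}M_{n+k}(a)\leq \sum_{k=0}^{n-1}\frac{\Lambda_{2n\!-\!1}}{n+k}=\Lambda_{2n\!-\!1}H^{2n\!-\!1}_{n-1}=1 .
\]
Hence all the termwise inequalities are equalities:
\[
\ell_{n-k}=M_{n+k}(a)=\frac{\Lambda_{2n\!-\!1}}{n+k}\quad\text{for } k=0,\ldots,n-1 .
\]
This says exactly that the sticks after $n$ breaks have the lengths in~\eqref{nsticks}. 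Only the hypothesis $\Lambda_{2n\!-\!1}(a)=\Lambda_{2n\!-\!1}$ is used here. The condition $\rho_k<2$ is not needed for this statement, although it matters for existence.

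The step needing care is the justification of~\eqref{ineq:M_n} itself, since the sum argument depends entirely on it. After $n$ breaks, each further break destroys exactly one stick and replaces it with two. After $k$ further breaks, at most $k$ of the original $n$ sticks have been broken. So at least one of the $k+1$ longest original sticks $\ell_{n-k},\ldots,\ell_n$ survives intact, and therefore $M_{n+k}(a)\geq\ell_{n-k}$. This holds for any breaking rule, so no greedy assumption is needed.

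Finally, the theorem concerns the lengths after exactly $n$ breaks, and the equality chain pins down the multiset $\{\ell_1,\ldots,\ell_n\}$ completely. Under the equivalence of Section~4, where sequences with the same stick lengths are identified, this is the claimed uniqueness. No sharper tool than the equality case of the De Bruijn--Erd\H{o}s argument is needed; the only real content is checking that each inequality used there is termwise and can be forced to equality.
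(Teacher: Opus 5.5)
Your proposal is correct and follows the paper's proof: both force the inequalities \eqref{ineq:M_n} and \eqref{ineq:Lambda} from the proof of Lemma~\ref{lem:1} to be equalities, which gives $\ell_{n-k}=M_{n+k}(a)=\Lambda_{2n\!-\!1}/(n+k)$. The paper also shows, by greedily gluing sticks backwards, that these lengths can actually be reached with $kM_k(a)\leq\Lambda_{2n\!-\!1}$ and ratio below $2$ at the earlier steps; this is the existence part, which you correctly note is not needed for the uniqueness statement.
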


\begin{proof}
Suppose that $a_1,\ldots,a_{2n\!-\!1}$ is a $(2n\!-\!1)$-BET sequence.
Then inequalities~\eqref{ineq:M_n} and~\eqref{ineq:Lambda}
in the proof of Lemma~\ref{lem:1} must both be equalities.
Using the notation of that proof, Writing $\Lambda=\Lambda_{2n\!-\!1}$, we obtain
\[
\ell_{n-k}=M_{n+k}(a)
=\frac{\Lambda}{n+k},
\qquad k=0,\ldots,n-1.
\]
Thus, after the first $n$ breaks, the stick lengths are precisely
those in~\eqref{nsticks}.

To complete the construction, we show that these lengths can
indeed be attained while respecting
$kM_k(a)\leq\Lambda$ at every earlier step.
Starting with the $n$ sticks in~\eqref{nsticks}, we work backwards,
repeatedly gluing the two shortest sticks until only the original
stick remains.
The ratio between the longest and shortest sticks remains
strictly less than $2$. Indeed, the glued stick is longer than
every remaining stick, but shorter than twice the shortest
remaining stick.

 Suppose that the current $k$
stick lengths, in decreasing order, are bounded term by term by
\[
\frac{\Lambda}{k},\;
\frac{\Lambda}{k+1},\;
\ldots,\;
\frac{\Lambda}{2k-1}.
\]
The sum of the two shortest lengths is at most
\[
\frac{\Lambda}{2k-2}+\frac{\Lambda}{2k-1}
<\frac{\Lambda}{k-1}.
\]
Since the glued stick becomes the longest, the resulting
$k-1$ lengths are bounded term by term by
\[
\frac{\Lambda}{k-1},\;
\frac{\Lambda}{k},\;
\ldots,\;
\frac{\Lambda}{2k-3}.
\]
Induction therefore gives $kM_k(a)\leq\Lambda$ for
$k=1,\ldots,n$. Reversing the gluing procedure gives the
required sequence of breaks.
\end{proof}

The $2n$-BET sequence can be constructed by greedy stick-gluing from sticks of lengths 
\begin{equation}\label{2nsticks}
\frac{\lambda_{2n}}{n+1}\;,\;\frac{\lambda_{2n}}{n+1}\;,\;\frac{\lambda_{2n}}{n+2}\;,\;\frac{\lambda_{2n}}{n+2}\;,\;\ldots\;,\;\frac{\lambda_{2n}}{2n}\;,\;\frac{\lambda_{2n}}{2n},
\end{equation}
with $\lambda_{2n}$ as in Equation (\ref{eq:Lambdalambda}).

\begin{theorem}\label{thm:fin2}
    There is only one way to produce a $2n$-BET sequence and it is greedy. 
\end{theorem}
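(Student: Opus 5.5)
We will run the argument of Lemma~\ref{lem:1} for the minimum in the equality case, in the same way that Theorem~\ref{thm:fin1} ran it for the maximum. Let $a_1,\ldots,a_{2n}$ be a $2n$-BET sequence, write $\lambda=\lambda_{2n}=1/(2H^{2n}_n)$, and let $\ell_1\le\cdots\le\ell_{2n}$ be the final stick lengths. By definition, $\lambda_{2n}(a)=\lambda$ gives $k\,m_k(a)\ge\lambda$ for every $k\le 2n$. Substituting this into \eqref{glue} and \eqref{lambdabound2} gives
\[
\sum_{k=n+1}^{2n}\frac{\lambda}{k}\le\sum_{k=n+1}^{2n}m_k(a)\le \ell_1+\ell_3+\cdots+\ell_{2n-1}\le\frac12 .
\]
The two outer terms are both equal to $\tfrac12$, so every inequality in this chain is an equality. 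Three consequences follow.

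First, $m_k(a)=\lambda/k$ for $n<k\le 2n$. Second, $m_{2n-j}=\ell_{2j+1}$ for each $j$, which is the equality case of \eqref{ineq:m_n}. Third, $\ell_1+\ell_3+\cdots+\ell_{2n-1}=\tfrac12=\ell_2+\ell_4+\cdots+\ell_{2n}$. Since $\ell_{2i-1}\le\ell_{2i}$ for each $i$, the last equality forces $\ell_{2i-1}=\ell_{2i}$. Combining this with the second consequence, the final lengths are exactly the pairs in \eqref{2nsticks}.

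Next we read off the last $n$ steps. Going backwards from step $2n$, the minimum drops from $\ell_{2j+1}$ at step $2n-j$ to $\ell_{2j+3}$ at step $2n-j-1$. At each backward step the two shortest sticks both have the current minimal length $\lambda/(2n-j)$ (the twin pair), and at least one of them must disappear. Exactly one gluing occurs, and the minimum afterwards is $\lambda/(2n-j-1)$, so both twins must disappear in that single gluing, i.e.\ they are glued together. The resulting stick has length $2\lambda/(2n-j)$. For $j\le n-1$ this exceeds every remaining stick, since the remaining sticks have length at most $\lambda/(n+1)$ or are earlier glued sticks of length $2\lambda/(2n-j')$ with $j'<j$. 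So forward in time, steps $n+1,\ldots,2n$ each break a longest stick into two equal halves that are shorter than everything else. This is the greedy rule, and it is uniquely determined.

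At step $n$ the lengths are therefore $2\lambda/(n+1),\ldots,2\lambda/(2n)$, which is the configuration \eqref{nsticks} with $\Lambda=2\lambda$ and total length $1$.

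\textbf{The main obstacle} is the first $n$ steps. Here we must show that the constraints $k\,m_k\ge\lambda$ and $\rho_k<2$ for $k\le n$ force the greedy backward gluing, i.e.\ that the configuration $2\lambda/(k+1),\ldots,2\lambda/(2k)$ at step $k$ is forced for every $k$. We argue by downward induction, assuming the step-$k$ configuration. Going back one step, two sticks are glued, and the minimum at step $k-1$ must be at least $\lambda/(k-1)$. Every stick other than the shortest one $2\lambda/(2k)=\lambda/k$ is at least $2\lambda/(2k-1)>\lambda/(k-1)$, so that alone imposes nothing beyond the shortest stick $\lambda/k$ being glued. Rigidity therefore has to come from $\rho_{k-1}<2$ combined with the future constraints. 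We would first try the following argument. If $\lambda/k$ is glued to anything other than the second shortest stick $2\lambda/(2k-1)$, then the remaining shortest stick $2\lambda/(2k-1)$ must be less than twice the new glued stick, and the glued stick also cannot exceed twice the minimum; push these comparisons through the earlier steps down to $k=1$ to reach a contradiction. If that fails, the fallback is to rerun the counting argument of Lemma~\ref{lem:1} at the intermediate scale $k$, as was done for \eqref{glue}, this time using the strict ratio condition to show that any non-greedy gluing makes some $j\,m_j<\lambda$ for $j<k$. We expect this step to be the delicate part, and it may be necessary to strengthen the induction hypothesis to track both the maximum and the minimum simultaneously.
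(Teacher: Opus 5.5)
Your treatment of the equality case of \eqref{glue}, the twin structure of the final lengths, and the forced gluings for steps $n+1,\ldots,2n$ is correct and matches the paper. The gap is in the first $n$ steps, which you leave unresolved. The obstacle you describe there rests on a reversed inequality. Since $\lambda/(k-1)=2\lambda/(2k-2)$ and $2k-1>2k-2$, one has $2\lambda/(2k-1)<\lambda/(k-1)$, not $>$. So the second-shortest stick, like the shortest, lies below the threshold imposed by $(k-1)\,m_{k-1}(a)\geq\lambda$. Leaving either of them unglued is therefore forbidden, and the two shortest sticks must be glued to each other. This is exactly the paper's key step, and it needs no appeal to $\rho<2$ or to constraints at other steps.

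There is a second, independent problem. The induction hypothesis you propose, that the lengths at step $k$ are exactly $2\lambda/(k+1),\ldots,2\lambda/(2k)$, is false for $k<n$. These numbers sum to $2\lambda H^{2k}_k=H^{2k}_k/H^{2n}_n<1$. Concretely, gluing $2\lambda/(2k-1)$ and $2\lambda/(2k)$ gives a stick strictly longer than $2\lambda/k$. For $n=2$, your hypothesis at $k=1$ would give a single stick of length $\lambda=6/7$.

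What actually propagates is a two-sided bound. With the step-$m$ lengths in decreasing order, $2\lambda/(m+j-1)>\ell^m_j\geq 2\lambda/(m+j)$ for $j=1,\ldots,m$. Given this, $\ell^m_{m-1}<\lambda/(m-1)$ forces the gluing of the two shortest sticks. Their sum lies strictly between $2\lambda/m$ and $2\lambda/(m-1)$, so it exceeds $\ell^m_1$. The glued stick therefore becomes the longest, and the same bounds hold at step $m-1$. The bounds also give $\ell^m_1<2\ell^m_m$, so the reconstructed process satisfies $\rho<2$ and is greedy.
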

\begin{proof}
    Suppose that $a_1,\ldots,a_{2n}$ is a $2n$-BET sequence.
    This implies that inequality~(\ref{glue}) in the proof of Lemma~\ref{lem:1}
    is an equality:
    \[\ell_1+\ell_3+\cdots+\ell_{2n\!-\!1}=\frac 12.\]
    This implies that 
    \[
    \ell_2+\ell_4+\cdots+\ell_{2n}=\frac 12
    \]
    as well.
    In particular, $\ell_{2j-1}=\ell_{2j}$ and $m_{2n-k}=\ell_{2k+1}$
    for $k=0,\ldots,n-1$. The inequality in (\ref{lambdabound2}) must
    be an equality and \[j\cdot m_{j}=\frac{1}{2H_n^{2n}}\] for $j=n+1,\ldots,2n$.
    It follows that the $2n$-tuple of stick-lengths after
    $2n$ breaks must be as in (\ref{2nsticks}). If we work
    back from $2n$ to $n$ breaks by gluing sticks, this can only be done 
    by gluing the two smallest sticks to maintain the ratio~$<2$.

   We show that the only way to work back to the first break
is by greedy stick-gluing, proving uniqueness of the
$2n$-BET sequence.
Write $\lambda=\lambda_{2n}$ and let
$(\ell_1^m,\ldots,\ell_m^m)$ be the stick lengths after
$m$ breaks, in decreasing order.
Each gluing step must preserve the bound
$m\ell_m^m\geq\lambda$.
We have already proved that
\[
(\ell_1^n,\ldots,\ell_n^n)
=
\left(
\frac{2\lambda}{n+1},
\frac{2\lambda}{n+2},
\ldots,
\frac{2\lambda}{2n}
\right).
\]
    We argue by induction backwards from $n$ to $1$ that
\begin{equation}\label{eq:bounds}
\frac{2\lambda}{m+j-1}>\ell_j^m
\geq\frac{2\lambda}{m+j},
\qquad j=1,\ldots,m.
\end{equation}
These bounds hold at $m=n$. Suppose that they hold for some
$m\geq2$. To return to $m-1$ breaks, we must glue two sticks.
Both of the two shortest sticks have length strictly less than
$\lambda/(m-1)$, since
\[
\ell_m^m\leq\ell_{m-1}^m
<\frac{2\lambda}{2m-2}
=\frac{\lambda}{m-1}.
\]
Leaving either one unglued would therefore violate
$(m-1)\ell_{m-1}^{m-1}\geq\lambda$.
Thus, we must glue the two shortest sticks.
Their combined length satisfies
\[
\frac{2\lambda}{m-1}
>
\ell_{m-1}^m+\ell_m^m
>
\frac{2\lambda}{m}
>
\ell_1^m.
\]
Here the first two inequalities follow from
\[
\frac{2\lambda}{2m-2}+\frac{2\lambda}{2m-1}
>
\ell_{m-1}^m+\ell_m^m
\geq
\frac{2\lambda}{2m-1}+\frac{2\lambda}{2m}.
\]
Hence, the glued stick becomes the longest, and
\[
\ell_1^{m-1}=\ell_{m-1}^m+\ell_m^m,
\qquad
\ell_j^{m-1}=\ell_{j-1}^m
\quad (j=2,\ldots,m-1).
\]
The bounds~\eqref{eq:bounds} follow directly for $j>1$,
and the displayed inequalities establish them for $j=1$.
This completes the induction.
\end{proof}

\section{Uniqueness of the infinite BET sequence}
We now consider infinite sequences $(a)$ that satisfy all three bounds~(\ref{ineq:all3}).
Let $I_1$ denote the unit interval, and let $I_2, I_3, \ldots$ be the subsequent sticks produced by this process. 
These can be arranged in strictly decreasing order of length, so that 
\(\ell(I_1) > \ell(I_2) > \ell(I_3) > \cdots\). 
We denote their lengths by \(\ell_1, \ell_2, \ldots\).

\begin{lemma}
Suppose that a sequence $(a)$ satisfies
\begin{equation}\label{eq:all3}
\rho_n(a)<2,\qquad
nM_n(a)<\frac{1}{\log 2},\qquad
nm_n(a)>\frac{1}{2\log 2}
\end{equation}
for all $n$.
Let $\ell_n$ be the length of the longest stick after $n$ breaks.
Then
\[
\frac{1}{n\log 2}>\ell_n>
\frac{1}{(n+1)\log 2}.
\]
\end{lemma}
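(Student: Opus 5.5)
The upper bound is just the second inequality in~(\ref{eq:all3}). By definition $\ell_n = M_n(a)$, and the hypothesis $nM_n(a)<1/\log 2$ gives $\ell_n<\frac{1}{n\log 2}$ immediately. So all the work is in the lower bound, and I would get it from the third inequality in~(\ref{eq:all3}) applied at step $n+1$, not at step $n$.

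The key step is the observation already used in the proof of Lemma~\ref{lem:2}: $2m_{n+1}(a)\leq M_n(a)$. Passing from $n$ to $n+1$ breaks, some stick of length at most $M_n(a)$ is cut into two pieces. The shorter of these has length at most half the original, hence at most $M_n(a)/2$. The shortest stick after $n+1$ breaks is no longer than that piece, so the inequality follows.

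Combining this with $(n+1)m_{n+1}(a)>\frac{1}{2\log 2}$ gives
\[
\ell_n=M_n(a)\geq 2m_{n+1}(a)>\frac{2}{2(n+1)\log 2}=\frac{1}{(n+1)\log 2},
\]
which is the required strict lower bound. Strictness comes from the strict hypothesis on $m_{n+1}$.

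I do not expect a real obstacle here. The only point needing care is to use the minimum-length bound at index $n+1$ rather than $n$, and to record that the conclusion uses only the two scaled-length bounds in~(\ref{eq:all3}), not $\rho_n(a)<2$. If $\ell_n$ is instead read as the $n$-th term of the decreasing enumeration $\ell(I_1)>\ell(I_2)>\cdots$ introduced at the start of this section, I would add a short preliminary remark. One would check, using $\rho_n(a)<2$, that the longest stick after $n$ breaks is exactly the $n$-th stick in that enumeration, and then run the argument above unchanged.
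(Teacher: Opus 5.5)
Your proof is correct and follows the paper's chain $\ell_n=M_n(a)\geq 2m_{n+1}(a)>\frac{1}{(n+1)\log 2}$, with the upper bound read off directly. The only difference is how you justify $M_n(a)\geq 2m_{n+1}(a)$. You use the generic observation from Lemma~\ref{lem:2}, which needs no ratio hypothesis. The paper instead first uses $\rho_n(a)<2$ to show that the longest stick is always the one broken, and then notes that both of its pieces have length at least $m_{n+1}(a)$.

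Your route shows that these two bounds need only the scaled-length hypotheses. The paper's extra step is exactly the check you defer in your last paragraph. The subsequent theorem relies on it when it applies these bounds to the sticks $I_n$ of the decreasing enumeration.
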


\begin{proof}
At each step, the longest stick must be broken.
Indeed, if a stick of length $L$ is broken while a stick of
length $M\geq L$ remains, one of the new pieces has length
at most $L/2$. The resulting ratio is therefore at least
$2M/L\geq2$, contrary to~\eqref{eq:all3}.

Since $\ell_n=M_n(a)$, the upper bound follows directly
from~\eqref{eq:all3}.
The next break splits this longest stick into two pieces,
each of length at least $m_{n+1}(a)$. Hence
\[
\ell_n\geq2m_{n+1}(a)>
\frac{1}{(n+1)\log 2},
\]
which proves the lower bound.
\end{proof}

Stick-breaking can be represented by an infinite binary tree.
Each node represents a stick, and its two children represent
the pieces produced when it is broken.
For greedy stick-breaking, the nodes are indexed in
breadth-first order. In particular, the children of
stick $n$ are sticks $2n$ and $2n+1$.
We call a stick-breaking sequence \emph{eventually greedy}
if this description holds for all sufficiently large $n$.
\begin{figure}[ht]
\centering
\begin{tikzpicture}[
  level distance=10mm,
  level 1/.style={sibling distance=28mm},
  level 2/.style={sibling distance=14mm},
  level 3/.style={sibling distance=7mm},
  every node/.style={circle,draw,minimum size=6mm,inner sep=0,font=\sffamily\scriptsize}
]

% -------------------------
% First tree (left)
% -------------------------
\node (r1) {1}
  child { node (n2) {2}
    child { node (n4) {4}
      child { node (a8)  {8} }
      child { node (a9)  {9} }
    }
    child { node (n5) {5}
      child { node (a10) {10} }
      child { node (a11) {11} }
    }
  }
  child { node (n3) {3}
    child { node (n6) {6}
      child { node (a12) {12} }
      child { node (a13) {13} }
    }
    child { node (n7) {7}
      child { node (a14) {14} }
      child { node (a15) {15} }
    }
  };

% dotted continuation lines under third layer (left tree)
\foreach \nod in {a8,a9,a10,a11,a12,a13,a14,a15} {
  \draw[dotted] (\nod.south) -- ++(0,-6mm);
}

% -------------------------
% Second tree (right, shifted)
% -------------------------
\begin{scope}[xshift=6cm] % increase/decrease this to change spacing
\node (r2) {1}
  child { node {2}
    child { node {4}
      child { node (b8)  {8} }
      child { node (b10) [fill=red!30,draw=red] {10} } % highlighted swapped node
    }
    child { node {5}
      child { node (b9)  [fill=red!30,draw=red] {9} }  % highlighted swapped node
      child { node (b11) {11} }
    }
  }
  child { node {3}
    child { node {6}
      child { node (b12) {12} }
      child { node (b13) {13} }
    }
    child { node {7}
      child { node (b14) {14} }
      child { node (b15) {15} }
    }
  };

% dotted continuation lines under third layer (right tree)
\foreach \nod in {b8,b9,b10,b11,b12,b13,b14,b15} {
  \draw[dotted] (\nod.south) -- ++(0,-6mm);
}
\end{scope}

\end{tikzpicture}
\caption{A greedy splitting tree (left) and an eventually greedy splitting tree (right). In the second tree, the order of sticks 9 and 10 is exchanged, but the standard breadth-first indexing resumes thereafter. }\label{fig:1}
\end{figure}

\begin{theorem}
    The BET sequence is the unique eventually greedy stick-breaking sequence that satisfies the bounds
    in~(\ref{ineq:all3}).
\end{theorem}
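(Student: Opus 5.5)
The plan is to show the lengths agree with BET from some step $N$ on, by a self-similarity argument, and then propagate the agreement back to the first break.

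\textbf{Setup.} Let $(a)$ be eventually greedy and satisfy~(\ref{ineq:all3}). Index the sticks $I_1,I_2,\dots$ by decreasing length, as in the preceding lemma. For the BET sequence the corresponding lengths are $\beta_n=\log_2(1+\tfrac1n)$. They satisfy the exact splitting identity
\[
\beta_n=\log_2\tfrac{n+1}{n}=\log_2\tfrac{2n+1}{2n}+\log_2\tfrac{2n+2}{2n+1}=\beta_{2n}+\beta_{2n+1}.
\]
By eventual greediness there is an $N$ with $\ell_n=\ell_{2n}+\ell_{2n+1}$ for all $n\ge N$. Set $\varepsilon_n=\ell_n-\beta_n$. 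Then $\varepsilon_n=\varepsilon_{2n}+\varepsilon_{2n+1}$ for $n\ge N$, and iterating $k$ times gives
\[
\varepsilon_n=\sum_{j=2^kn}^{2^kn+2^k-1}\varepsilon_j .
\]

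\textbf{Step 1: the tail is forced.} The preceding lemma gives $\frac1{(j+1)\log2}<\ell_j<\frac1{j\log2}$. The elementary bounds $\frac{x}{1+x}\le\ln(1+x)\le x$ place $\beta_j$ in the same interval. That interval has width $\frac1{j(j+1)\log 2}$, so $|\varepsilon_j|\le \frac{1}{j^2\log 2}$. Substituting into the iterated identity,
\[
|\varepsilon_n|\le 2^k\cdot\frac{1}{(2^kn)^2\log2}\to0 \quad (k\to\infty).
\]
Hence $\ell_n=\beta_n$ for all $n\ge N$.

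\textbf{Step 2: stick sets at late times.} Next I identify the sticks present after $m$ breaks. The proof of the preceding lemma shows that a longest stick is always broken. The two new pieces are shorter than every surviving stick; I would verify this from $\rho<2$ and the bound $m\,m_m>\frac1{2\log 2}$, in the same way as the gluing argument in Theorem~\ref{thm:fin2}. Consequently the sticks broken during the first $m-1$ steps are exactly $I_1,\dots,I_{m-1}$. Since exactly $2m-1$ sticks have been created by then, the $m$ sticks present after $m$ breaks are $I_m,\dots,I_{2m-1}$. For $m\ge N$ this multiset equals the BET multiset $\{\beta_m,\dots,\beta_{2m-1}\}$.

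\textbf{Step 3: backward induction to $m=1$.} From step $m$ to step $m-1$ we must undo the last break. By Step 2 the two pieces are the two shortest sticks present, so the configuration at step $m-1$ is obtained by gluing the two shortest sticks. Applied to the BET multiset this gives $\beta_{2m-2}+\beta_{2m-1}=\beta_{m-1}$, which is again the BET multiset. Hence the stick lengths agree with those of BET at every step. This is uniqueness up to the equivalence introduced in the previous section.

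\textbf{Main obstacle.} The delicate point is Step 2: the newly created pieces must always be the two shortest sticks. Without this, the tree could be rearranged in its first $N$ levels, as in Figure~\ref{fig:1}, while still matching BET lengths late. My first attempt is the following. Suppose a piece $q$ of the broken stick $\ell_{m-1}$ exceeded some surviving stick $r$. Then the other piece satisfies $p<\ell_{m-1}-r$. Combining this with $\ell_{m-1}<\frac1{(m-1)\log2}$ and with $r,p>\frac1{2m\log2}$, which comes from $m\,m_m>\frac1{2\log 2}$, should yield a contradiction. If that estimate is not sharp enough, I would instead use the total length identity $\sum_{j=m}^{2m-1}\ell_j=1$ together with Step 1. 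Step 1 pins down the lengths exactly from $N$ onward, and one can then check directly that no pair other than the two shortest BET sticks sums to a length in the window $\left(\frac1{m\log2},\frac1{(m-1)\log2}\right)$ required for $\ell_{m-1}$.
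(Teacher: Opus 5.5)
Your Step~1 is correct. It is the paper's argument written in error form: the paper sandwiches $\ell_n=\sum_{j=0}^{2^k-1}\ell_{2^kn+j}$ between two Riemann sums for $\frac{1}{\log 2}\int_n^{n+1}\frac{dx}{x}$. The gap is Step~2, on which Step~3 rests, and neither route you offer for it works.

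\textbf{Why Step~2 fails as stated.} If you try to derive it from the bounds at steps up to $m$, the claim is false. The sequence $1\to\{0.37,0.63\}\to\{0.37,0.38,0.25\}$ satisfies all three inequalities in \eqref{ineq:all3} for $n=1,2,3$, yet the new piece $0.38$ exceeds the surviving $0.37$. The violation only appears at $n=4$: the $0.38$ stick must be broken, which leaves $4M_4\geq 4\cdot 0.37>1/\log 2$. This is why your first attempt gives no contradiction: $p,r>\frac{1}{2m\log 2}$ and $p+r<\frac{1}{(m-1)\log 2}$ are compatible.

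Your fallback criterion is false as well. For $m=3$, gluing $\log_2\frac43$ and $\log_2\frac65$ gives $\log_2\frac85\approx 0.678$. This lies inside the window $\bigl(\frac{1}{3\log 2},\frac{1}{2\log 2}\bigr)\approx(0.481,0.721)$. For large $m$ the pair $\beta_{2m-3},\beta_{2m-1}$ always lands in the window, since its sum is $\frac{1}{\log 2}\bigl(\frac{1}{m-1}-\frac{1}{4(m-1)^2}+O(m^{-3})\bigr)$.

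\textbf{What is missing.} You need to argue backward from the known tail, and to test the stick that would \emph{survive}, not the glued one.

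You do not need Step~2 to get started. Children of $I_n$ with $n\ge N$ have indices $\ge 2N$, so the sticks $I_2,\dots,I_{2N-1}$ are exactly the $2N-2$ children of $I_1,\dots,I_{N-1}$. Hence after $N$ breaks the sticks are $I_N,\dots,I_{2N-1}$, and they have BET lengths by Step~1.

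Now suppose the configuration after $m$ breaks is $\{\beta_m,\dots,\beta_{2m-1}\}$. Since $\log(1+x)<x$, both $\beta_{2m-2}$ and $\beta_{2m-1}$ are $<\frac{1}{(2m-2)\log 2}$. If either were still present after $m-1$ breaks, we would have $(m-1)m_{m-1}<\frac{1}{2\log 2}$. So they are exactly the two pieces of the last break. Gluing them gives $\beta_{m-1}$, and induction down to $m=1$ finishes the proof.

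This is the paper's argument. Your remark about arguing as in Theorem~\ref{thm:fin2} points to it, but it has to be run backward from step $N$ using the minimum bound on the surviving stick, not stated as a forward claim about every step.
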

\begin{proof}
Since the sequence is eventually greedy, there exists $N$
such that stick $n$ has children $2n$ and $2n+1$ whenever
$n\geq N$. Its descendants after $k$ generations are therefore
the sticks indexed by $2^kn,\ldots,2^k(n+1)-1$. Hence
\begin{equation}\label{grandchild}
\ell_n=\sum_{j=0}^{2^k-1}\ell_{2^kn+j}.
\end{equation}
By the previous lemma,
\[
\frac{1}{\log 2}
\sum_{j=0}^{2^k-1}\frac{1}{2^kn+j}
>
\ell_n
>
\frac{1}{\log 2}
\sum_{j=1}^{2^k}\frac{1}{2^kn+j}.
\]
As $k\to\infty$, both sums converge to
\[
\frac{1}{\log 2}\int_n^{n+1}\frac{dx}{x}
=\log_2\left(\frac{n+1}{n}\right).
\]
Thus, for every $n\geq N$,
\[
\ell_n=\log_2\left(\frac{n+1}{n}\right).
\]
In particular, after $n$ breaks the stick lengths are
\[
\log_2\left(\frac{n+1}{n}\right),\;
\ldots,\;
\log_2\left(\frac{2n}{2n-1}\right).
\]

We now reconstruct the earlier stages by gluing sticks.
Since $\log(1+x)<x$ for $x>0$, the second shortest stick
satisfies
\[
(n-1)\log_2\left(\frac{2n-1}{2n-2}\right)
<\frac{1}{2\log 2}.
\]
Thus, leaving either of the two shortest sticks unglued
would violate the lower bound on $(n-1)m_{n-1}(a)$.
Both must therefore be glued. Their combined length is
\[
\log_2\left(\frac{2n-1}{2n-2}\right)
+\log_2\left(\frac{2n}{2n-1}\right)
=\log_2\left(\frac{n}{n-1}\right).
\]
The resulting lengths are precisely
\[
\log_2\left(\frac{n}{n-1}\right),\;
\log_2\left(\frac{n+1}{n}\right),\;
\ldots,\;
\log_2\left(\frac{2n-2}{2n-3}\right),
\]
the BET lengths at the preceding stage.
Repeating this argument determines every earlier stage
uniquely.
\end{proof}

We do not know if the restriction of eventually greedy stick-breaking can be dropped. 
Ramshaw~\cite{ramshaw} proved that the BET sequence is not the only sequence satisfying 
the bounds once the strict inequalities in (\ref{ineq:all3}) are weakened to non-strict ones.

\section{Beyond stick-breaking}

A natural generalization of stick-breaking is to consider
breaking the stick in more than two pieces. 
Starting with a single piece, we repeatedly
choose one piece and divide it into $k$ smaller pieces.
We call this process \emph{$k$-shattering}.
Stick-breaking corresponds to $k=2$.

For a geometric example, consider the flat torus
obtained by identifying opposite sides of $[0,1)^2$.
Cutting along the two coordinate circles through the
origin opens the torus into a square. We then choose
a point inside the square and cut horizontally and
vertically through it, dividing the square into four
rectangles. Repeating this procedure, each time
subdividing one rectangle into four, produces
$1,4,7,10,\ldots$ tiles. This is an example of
$4$-shattering.

How small can we keep the ratio of the largest to
the smallest piece in a $k$-shattering process?
Related higher-dimensional splitting procedures
have been studied by Drmota and
Infusino~\cite{drmotaInfusino2012}.

\begin{lemma}[Generalized de Bruijn--Erd\H{o}s bounds]
\label{lem:generalized}
Let $(X,\mathcal F,\mu)$ be a probability measure space.
Starting with $X$, construct a sequence of measurable
partitions by repeatedly replacing one piece by $k$
pieces of positive measure, where $k\geq 2$.

Let $M_N$ and $m_N$ denote the largest and smallest
measures of the pieces when the partition has $N$
pieces. Thus $N\equiv 1\pmod{k-1}$.

For admissible $n$, define
\[
\Lambda_n=
\sup_{\substack{n/k<i\leq n\\
                 i\equiv 1\pmod{k-1}}}iM_i,
\qquad
\lambda_n=
\inf_{\substack{n/k<i\leq n\\
                 i\equiv 1\pmod{k-1}}}im_i.
\]
Then, for every $n\equiv 1\pmod{k-1}$,
\[
\Lambda_{kn-(k-1)}
\geq
\left(\sum_{j=0}^{n-1}
\frac{1}{n+(k-1)j}\right)^{-1}
\]
and
\[
\lambda_{kn}
\leq
\frac{1}{k}
\left(\sum_{j=0}^{n-1}
\frac{1}{kn-(k-1)j}\right)^{-1}.
\]
In particular,
\[
\limsup_{N\to\infty}NM_N
\geq \frac{k-1}{\log k},
\qquad
\liminf_{N\to\infty}Nm_N
\leq \frac{k-1}{k\log k}.
\]
\end{lemma}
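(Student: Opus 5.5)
The plan is to copy the proof of Lemma~\ref{lem:1} almost word for word. Index the partitions by their number of pieces $N\equiv 1\pmod{k-1}$. One shattering step takes $N$ to $N+(k-1)$. Every admissible $N$ occurs along the process, because each step raises the piece count by exactly $k-1$. Note that $kn\equiv n\equiv 1 \pmod{k-1}$, since $k\equiv 1$. Hence $kn$ and $kn-(k-1)$ are admissible whenever $n$ is.

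For the upper bound on the maximum, fix the partition with $n$ pieces and sort the measures $\ell_1\le\cdots\le\ell_n$. After $j$ further steps, at most $j$ of these pieces have been shattered. So at least one of $\ell_{n-j},\ldots,\ell_n$ is still present, and
\[
M_{n+(k-1)j}\ge \ell_{n-j},\qquad j=0,\ldots,n-1 .
\]
Summing over $j$ gives $\sum_{j=0}^{n-1}M_{n+(k-1)j}\ge 1$, the analogue of~\eqref{cut}. The indices run from $n$ to $kn-(k-1)$. All of them exceed $(kn-(k-1))/k$, so they lie in the window defining $\Lambda_{kn-(k-1)}$. Consequently $\Lambda_{kn-(k-1)}\ge (n+(k-1)j)M_{n+(k-1)j}$ for each $j$. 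Dividing by $n+(k-1)j$ and summing yields the claimed bound, exactly as in~\eqref{Lambdabound}.

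For the lower bound on the minimum, we glue pieces back together. Take the partition with $kn$ pieces, sorted $\ell_1\le\cdots\le\ell_{kn}$, and undo $j$ steps. Each undone step merges $k$ pieces, so at most $kj$ of the original pieces are involved. Hence one of the $kj+1$ smallest pieces survives untouched, and
\[
m_{kn-(k-1)j}\le \ell_{kj+1},\qquad j=0,\ldots,n-1 .
\]
Monotonicity gives $\ell_{kj+1}\le \ell_{kj+i}$ for $i=1,\ldots,k$, so
\[
k\sum_{j=0}^{n-1}\ell_{kj+1}\le\sum_{i=1}^{kn}\ell_i=1 .
\]
This is the analogue of~\eqref{glue}: $\sum_j m_{kn-(k-1)j}\le 1/k$. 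The indices run from $kn$ down to $n+k-1>n$, inside the window for $\lambda_{kn}$. Using $\lambda_{kn}\le (kn-(k-1)j)\,m_{kn-(k-1)j}$ and summing gives the stated inequality.

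For the asymptotic statements, observe that $\sum_{j=0}^{n-1}\frac{1}{n+(k-1)j}$ and $\sum_{j=0}^{n-1}\frac1{kn-(k-1)j}$ are Riemann sums. Both converge to $\frac{1}{k-1}\int_1^k\frac{dx}{x}=\frac{\log k}{k-1}$. Since the windows $(N/k,N]$ move off to infinity, a bound on $\Lambda_N$ (resp.\ $\lambda_N$) along a subsequence $N\to\infty$ forces the corresponding bound on $\limsup NM_N$ (resp.\ $\liminf Nm_N$). This yields $\frac{k-1}{\log k}$ and $\frac{k-1}{k\log k}$.

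The only step needing care is the counting in the gluing direction. Undoing one step restores one piece but can remove $k$ of the current pieces, so the right index is $kj+1$ rather than $2j+1$. I will check that this index, together with the grouping of the sorted list into blocks of length $k$, gives the factor $1/k$. The rest is routine bookkeeping of the admissible residues.
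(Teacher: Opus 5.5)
Your proposal is correct and follows the paper's proof essentially step for step: the same count that at most $j$ of the sorted pieces are shattered gives $M_{n+(k-1)j}\ge \ell_{n-j}$; the same gluing count gives $m_{kn-(k-1)j}\le \ell_{kj+1}$, and grouping the sorted list into blocks of $k$ produces the factor $1/k$; and the same Riemann-sum limit $\frac{\log k}{k-1}$ finishes the argument. Your explicit checks that all indices lie in the defining windows, and that $kn$ and $kn-(k-1)$ are admissible, are details the paper leaves implicit.
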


\begin{proof}
Let $\ell_1\leq\ell_2\leq\cdots\leq\ell_n$
be the measures of the $n$ pieces, sorted in
ascending order. They add up to one.

Each split removes at most one of these original
pieces. After $j$ splits, at least $n-j$ original
pieces remain. Consequently,
\[
M_{n+(k-1)j}\geq\ell_{n-j},
\qquad j=0,\ldots,n-1.
\]
Summing gives
\[
\sum_{j=0}^{n-1}M_{n+(k-1)j}\geq 1.
\]
By definition,
\[
\Lambda_{kn-(k-1)}
\sum_{j=0}^{n-1}\frac{1}{n+(k-1)j}
\geq 1.
\]
This proves the first inequality.

For the second inequality, we work backwards,
gluing $k$ pieces together at each step.

Let $\ell_1\leq\ell_2\leq\cdots\leq\ell_{kn}$
be the measures of the pieces in a partition
with $kn$ pieces. After $j$ gluings, at most
$kj$ of these original pieces have disappeared.
Hence at least one of the $kj+1$ smallest
pieces remains, and therefore
\[
m_{kn-(k-1)j}\leq\ell_{kj+1},
\qquad j=0,\ldots,n-1.
\]
Since the measures are sorted,
\[
\sum_{j=0}^{n-1}\ell_{kj+1}\leq\frac1k.
\]
It follows that
\[
\sum_{j=0}^{n-1}m_{kn-(k-1)j}\leq\frac1k.
\]
By definition,
\[
\lambda_{kn}
\sum_{j=0}^{n-1}\frac{1}{kn-(k-1)j}
\leq\frac1k.
\]
This proves the second inequality.

Finally, both sums of reciprocals converge to
\[
\frac{\log k}{k-1}
\]
as $n\to\infty$. The asymptotic inequalities
follow.
\end{proof}

The lower bound on the ratio also extends to
$k$-shattering. In particular, for glass shattering
into four pieces at each step, the ratio of the largest to the
smallest tile area must exceed every constant
$c<4$ infinitely often.

\begin{lemma}\label{lem:ratio-generalized}
In the setting of the preceding lemma, let
$\rho_n=M_n/m_n$. For every $c<k$ and every $N$,
there exists an admissible $n\geq N$ such that
$\rho_n>c$. In particular,
$\limsup_{n\to\infty}\rho_n\geq k$.
\end{lemma}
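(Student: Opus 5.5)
We follow the telescoping argument of Lemma~\ref{lem:2}, adapted to $k$-shattering. Admissible indices are $N, N+(k-1), N+2(k-1),\ldots$; we write $N_t=N+t(k-1)$ for the number of pieces after $t$ further splits.

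The key single-step inequality is
\[
k\,m_{N_{t+1}}\leq M_{N_t}.
\]
To see this, note that at the step from $N_t$ to $N_{t+1}$ pieces, some piece of measure $L\leq M_{N_t}$ is replaced by $k$ pieces. These have total measure $L$, so the smallest has measure at most $L/k$. Hence $m_{N_{t+1}}\leq L/k\leq M_{N_t}/k$.

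Now argue by contradiction. Suppose $c<k$ and $\rho_n\leq c$ for all admissible $n\geq N$. We chain the ratios $M_{N_t}/m_{N_t}$ as in~(\ref{telescopic}). Writing
\[
\prod_{t=0}^{s}\frac{M_{N_t}}{m_{N_t}}
=\frac{M_{N_s}}{m_{N_0}}\prod_{t=0}^{s-1}\frac{M_{N_t}}{m_{N_{t+1}}}
\]
and applying the single-step inequality to each factor of the last product gives
\[
\frac{k^{s}M_{N_s}}{m_{N}}\leq c^{s+1}.
\]
Next we need a lower bound on $M_{N_s}$. Since the $N_s$ measures sum to one, $M_{N_s}\geq 1/N_s=1/(N+s(k-1))$. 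Also $m_N\leq 1$, so
\[
\frac{k^{s}}{N+s(k-1)}\leq c^{s+1}\quad\text{for all } s\geq 0.
\]
This rearranges to $(k/c)^s\leq c\,(N+s(k-1))$. The left side grows exponentially in $s$ because $k/c>1$, while the right side grows linearly, so this fails for large $s$. That contradiction produces an admissible $n\geq N$ with $\rho_n>c$.

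Since $c<k$ and $N$ are arbitrary, it follows that $\limsup_{n\to\infty}\rho_n\geq k$. We need $c>0$ for the division above, but that may be assumed without loss of generality, since $\rho_n\geq 1$ always.

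The only delicate point is the single-step inequality, which relies on the pieces having positive measure so that $m$ is well defined and nonzero. This holds by the hypothesis of Lemma~\ref{lem:generalized}. The rest is the same bookkeeping as in Lemma~\ref{lem:2}, with $2$ replaced by $k$ and the index step $1$ replaced by $k-1$.
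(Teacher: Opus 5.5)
Your proof is correct and follows essentially the same route as the paper. You use the single-step bound $k\,m_{n+k-1}\le M_n$, telescope the ratios along $N, N+(k-1), N+2(k-1),\ldots$, and then use $M_{N_s}\ge 1/N_s$ and $m_N\le 1$ to reach $k^s/(N+s(k-1))\le c^{s+1}$, which is impossible for $c<k$.
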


\begin{proof}
When a piece is split into $k$ parts, at least one
of them has measure at most $1/k$ of the original.
Consequently,
\[
k m_{n+k-1}\leq M_n.
\]
Suppose that $\rho_n\leq c$ for all admissible
$n\geq N$. We may assume that $N$ is admissible.
Writing $n_j=N+j(k-1)$, we obtain
\[
k m_{n_{j+1}}\leq M_{n_j}
=\rho_{n_j}m_{n_j}.
\]
Iterating gives
\[
k^j m_{n_j}
\leq m_N\prod_{i=0}^{j-1}\rho_{n_i}.
\]
Since $M_{n_j}\geq 1/n_j$ and
$m_{n_j}=M_{n_j}/\rho_{n_j}$, it follows that
\[
\begin{aligned}
\frac{k^j}{N+j(k-1)}
&\leq k^j M_{n_j}\\
&\leq m_N\prod_{i=0}^{j}\rho_{n_i}\\
&\leq c^{j+1}.
\end{aligned}
\]
As $j\to\infty$, this is impossible when $c<k$.
\end{proof}

Recall that the BET sequence is given by
\[
b_n=\log_2(2n-1)\pmod 1.
\]
Its underlying construction repeatedly replaces a
logarithmic interval of length
\[
\log_2\frac{n+1}{n}
\]
by two intervals of lengths
\[
\log_2\frac{2n+1}{2n}
\quad\text{and}\quad
\log_2\frac{2n+2}{2n+1}.
\]
The two lengths add up to the original length.
We extend this construction to $k$-shattering by
replacing one interval by $k$ logarithmic intervals.

\begin{lemma}[Generalized BET construction]
\label{lem:generalized-BET}
For every integer $k\geq2$, there exists a
$k$-shattering process such that at every stage
\begin{equation}\label{ineq:kall3}
\boxed{
\rho_n(a)<k,\qquad
nM_n(a)<\frac{k-1}{\log k},\qquad
nm_n(a)>\frac{k-1}{k\log k}.}
\end{equation}
In particular, all three bounds in the preceding
lemmas are optimal.
\end{lemma}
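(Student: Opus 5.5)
We will realize the process as a logarithmic (base $k$) analogue of the BET construction, in which the pieces are images of arithmetic progressions with step $k-1$ under $\log_k$. The circle $[0,1)$ is identified with $\log_k$ of an interval $[N,kN)$ modulo $1$. For admissible $N\equiv1\pmod{k-1}$, the partition with $N$ pieces will consist of the arcs
\[
P_x=\bigl[\log_k x,\ \log_k(x+k-1)\bigr)\pmod 1,\qquad x=N,\ N+(k-1),\ \ldots,\ kN-(k-1).
\]
These are $(kN-N)/(k-1)=N$ arcs. They tile $[\log_k N,\log_k(kN))$, an interval of length exactly $1$, so they partition the circle. We start at $N=1$, where there is a single arc $[0,1)$, the whole circle.

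The inductive step shatters the longest arc $P_N$ into $k$ pieces. Multiplying by $k$ shifts logarithms by $1$, so modulo $1$ the arc $P_N$ coincides with $[\log_k(kN),\log_k(kN+k(k-1)))$. This arc is the union of the $k$ arcs $P_{kN+(k-1)i}$ for $i=0,\ldots,k-1$. After the replacement, the new family is $\{P_x\}$ with $x$ running over $N',N'+(k-1),\ldots,kN'-(k-1)$, where $N'=N+k-1$. So the form of the partition is preserved, the piece count increases by $k-1$, and the process is a genuine $k$-shattering.

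It remains to check the three bounds, using that $|P_x|=\log_k\bigl(1+\tfrac{k-1}{x}\bigr)$ is decreasing in $x$.

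\begin{itemize}
\item \emph{Largest piece.} The largest piece is $P_N$. Since $\log(1+t)<t$ for $t>0$,
\[
NM_N=N\log_k\Bigl(1+\tfrac{k-1}{N}\Bigr)<\frac{k-1}{\log k}.
\]
\item \emph{Smallest piece.} The smallest piece is $P_{kN-(k-1)}$, of length $-\log_k\bigl(1-\tfrac{k-1}{kN}\bigr)$. Since $-\log(1-t)>t$ for $0<t<1$,
\[
Nm_N>\frac{k-1}{k\log k}.
\]
\item \emph{Ratio.} Combining the two strict estimates,
\[
\rho_N=\frac{M_N}{m_N}<\frac{(k-1)/(N\log k)}{(k-1)/(kN\log k)}=k.
\]
\end{itemize}
This gives all three inequalities in~\eqref{ineq:kall3} at every stage.

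Finally, optimality follows by comparing with the earlier lemmas. Lemma~\ref{lem:generalized} shows that the constants $\frac{k-1}{\log k}$ and $\frac{k-1}{k\log k}$ cannot be improved asymptotically. Lemma~\ref{lem:ratio-generalized} shows the same for the constant $k$ in the ratio. The only delicate point is the bookkeeping in the inductive step, namely verifying that the $k$ new arcs are exactly the new tail of the progression, which the identity $\log_k(kx)\equiv\log_k x\pmod 1$ handles. Every estimate is then a one-line consequence of $\log(1+t)<t<-\log(1-t)$.
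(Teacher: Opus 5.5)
Your proof is correct and follows the paper's argument. You use the same logarithmic pieces of length $\log_k\frac{x+k-1}{x}$ indexed by the progression $x=N, N+(k-1),\ldots,kN-(k-1)$, the same telescoping split of the largest piece into $k$ such pieces, and the same inequalities $\log(1+t)<t<-\log(1-t)$. The only cosmetic difference is that you track actual arcs on the circle, starting from $N=1$ and using $\log_k(kx)\equiv\log_k x \pmod 1$, whereas the paper tracks only the lengths $L_a$, starting from the first split into $k$ pieces.
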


\begin{proof}
Put $d=k-1$ and
\[
L_a=\log_k\frac{a+d}{a}.
\]
We begin by dividing $[0,1]$ into $k$ intervals
of lengths
\[
L_k,L_{k+d},\ldots,L_{k+(k-1)d}.
\]
Their sum is one, since
\[
\sum_{j=0}^{k-1}L_{k+jd}
=\log_k\frac{k+kd}{k}=1.
\]

At each subsequent step, we split the largest
interval according to
\[
L_a=\sum_{j=0}^{k-1}L_{ka+jd}.
\]
This identity follows by telescoping.

We claim that whenever there are $n$ intervals,
their lengths are
\[
L_n,L_{n+d},\ldots,L_{kn-d}.
\]
This holds for $n=k$. Since $L_a$ decreases
strictly with $a$, the largest interval has
length $L_n$. Splitting it removes $L_n$
and introduces
\[
L_{kn},L_{kn+d},\ldots,L_{kn+(k-1)d}.
\]
The resulting lengths are precisely
\[
L_{n+d},L_{n+2d},\ldots,L_{k(n+d)-d}.
\]
The claim follows by induction.

Consequently,
\[
M_n=\log_k\frac{n+d}{n},
\qquad
m_n=\log_k\frac{kn}{kn-d}.
\]
Using
\[
\log(1+x)<x,\qquad
-\log(1-x)>x
\quad (0<x<1),
\]
we obtain
\[
nM_n<\frac{d}{\log k}
<knm_n.
\]
Thus all three inequalities in
\eqref{ineq:kall3} hold.
\end{proof}

\section{Conclusion}

Some of our results first appeared in the first author's PhD thesis~\cite{brethouwer2024}, on mathematical models for conflict and crisis management, and in the analysis of the social distancing experiment~\cite{buijsrogge2026self}.

The authors used ChatGPT (OpenAI) to help improve the wording
and presentation of the manuscript. All suggestions were reviewed by the authors,
who take full responsibility for the final text and results.

\bibliographystyle{siam}
\bibliography{deBE}
\end{document}